\def\BibTeX{{\rm B\kern-.05em{\sc i\kern-.025em b}\kern-.08em
    T\kern-.1667em\lower.7ex\hbox{E}\kern-.125emX}}
\let\NAT@parse\undefined
\newtheorem{assumption}{Assumption}
\newtheorem{lemma}{Lemma}
\begin{document}

\title{On Distributed Optimization in the Presence of Malicious Agents
\thanks{DISTRIBUTION A. Approved for public release. Distribution unlimited. Case Number AFRL-2021-0298. Dated 05 Feb 2021.}
}

\author{\IEEEauthorblockN{Iyanuoluwa Emiola}
\IEEEauthorblockA{\textit{ Department of Electrical Engineering} \\
\textit{University of Central Florida}\\
Orlando FL 32816, USA \\
iemiola@knights.ucf.edu}
\and
\IEEEauthorblockN{Laurent Njilla}
\IEEEauthorblockA{\textit{U.S. Air Force Research Laboratory.} \\
Rome NY USA \\
laurent.njilla@us.af.mil}
\and
\IEEEauthorblockN{Chinwendu Enyioha}
\IEEEauthorblockA{\textit{ Department of Electrical Engineering} \\
\textit{University of Central Florida}\\
Orlando FL 32816, USA \\
cenyioha@ucf.edu}
}

\maketitle

\begin{abstract}
In this paper, we consider an unconstrained distributed optimization problem over a network of agents, in which some agents are adversarial. We solve the problem via gradient-based distributed optimization algorithm and  characterize the effect of the adversarial agents on the convergence of the algorithm to the optimal solution.  
The attack model considered is such that agents locally perturb their iterates before broadcasting it to neighbors; and we analyze the case in which the adversarial agents cooperate in perturbing their estimates and the case where each adversarial agent acts independently. 
Based on the attack model adopted in the paper, we show that the solution converges to the neighborhood of the optimal solution and depends on the magnitude of the attack (perturbation) term. The analyses presented establishes conditions under which the malicious agents have enough information to obstruct convergence to the optimal solution by the non-adversarial agents. 
 \end{abstract}

\section{Introduction}
To solve an optimization problem over a network of agents in a distributed manner, gradient-based methods alongside an agreement update step are commonly used \cite{boyd2011distributed,nesterov2013introductory}. In the process, each agent iteratively updates their estimates and exchanges it with neighboring nodes. This well-studied process arrives at the optimal solution depending on certain assumptions made on the cost function being optimized and the choice of the step-size. The presence of adversarial nodes in the network who may have a different objective causes a problem and can hinder the non-adversarial nodes from arriving at an optimal solution. 
Agents in the network may act in a malicious or adversarial way either because they are faulty, or have been compromised and are being used as stooges for an undesired goal in the network. 

In typical formulations of distributed optimization problems, the objective can be considered decomposable as
\begin{equation}\label{eqn:original}
\min_{x\in \mathbb{R}^{p}} f(x)=\sum_{i=1}^n f_i(x_i),
\end{equation}
where $n$ is the total number of agents, $f_i(\cdot)$ is the local objective function of agent $i$, $x_i$ is the decision variable of each agent and $f(x)$ is the global objective function that is meant to be optimized. Each agent $i$ will also optimize its local objective function $f_i(x_i)$ and iteratively exchanges its decision variable $x_i$ with neighboring agents over a communication network. In the presence of malicious agents, however, the local and consensus computations are altered. In \cite{maybury2006detecting}, for example,  where an attempt is made to detect sensitive malicious insider threats.%


Different approaches have been taken to solve distributed optimization problems when adversarial nodes are present. An example is the approach taken in \cite{ravi2019case} where the author uses the Fast Row-stochastic Optimization with uncoordinated Step-sizes (FROST) algorithm that does not require the nodes to compute step sizes. The authors in \cite{ravi2019case} also considers the bounds on a parameter and a gradient bounds to show the strength of the attack, though an explicit characterization of the extent to which the perturbed parameter alters and prevents convergence is not presented. Another method is the topological approach in tolerating malicious nodes shown in \cite{sundaram2010distributed} where the author examines the conditions under which a malicious agent can be identified based on the topology and the size of the network. Some approaches to decision problems in the presence of adversaries often assume certain so-called `trusted' agents cannot be compromised, and use information being shared by those agents as a benchmark to identify and exclude malicious information; thus, building in resilience to their optimization algorithm \cite{baras2019trust,zhao2017resilient}.
Similarly flavored problems have been studied in the context of state estimation where methods to identify and extract malicious information are proposed \cite{sundaram2016secure, lu2020distributed}. Other related adversarial problems like \cite{marano2008distributed,yan2012vulnerability,zhang2014distributed,kailkhura2015consensus} explore the detection of attacks on distributed systems and protection strategies. 

\subsection{Contribution} 
This paper presents an analysis of the effects of malicious agents on the solution of a distributed optimization problem over a network using the gradient descent algorithm. We show how adversarial nodes can  disrupt convergence to optimal solution of the network with knowledge of the average initial value of the non-malicious agents. When the communication structure amongst the agents is a complete graph, we show how cooperation enables the agents to prevent convergence to the optimal solution by perturbing their local estimates. And when the communication structure is not a complete graph, we characterize how the malicious agents can cause disruption if they have an initial value of the regular agents estimates. We show that for the agents solving the distributed optimization problem to converge to a neighborhood of the optimal solution, the distance between their average initial value and the optimal solution has to be less than the magnitude of the attack vector. 
%

The rest of the paper follows the following structure: In Section \ref{sec:problemformulation}, the optimization problem and attack model is presented. Section \ref{sec:Convanalysiscentralized-BB} summarizes the convergence analyses and main results of the paper.  Numerical experiments follow in Section \ref{sec:Numerical} to illustrate the theoretical results. The paper ends with concluding remarks in Section \ref{sec:conclude}.

\subsection{Notation}
We respectively denote the set of positive and negative reals as $\mathbb{R}_+$ and $\mathbb{R}_-$. We denote a vector or matrix transpose as $(\cdot)^T$, and the L$2$-norm of a vector by $||\cdot||$. We also denote the gradient of a function $f(\cdot)$ as $\nabla f(\cdot)$ and an $n$ dimensional vector of ones as $1_n$.

\section{Problem formulation and Attack Model}\label{sec:problemformulation}
We consider a network comprising $n$ agents represented by an undirected graph $G = (\mathcal{V},\mathcal{E})$ where $\mathcal{V} = {1,2, ... n}$ is the set of nodes (agents) and   $\mathcal{E}={(i,j)}$ is the set of edges. Let the neighbors of each agent $i$ be denoted by the set $N_i = \{j: \ (i,j) \in \mathcal{E}\}$. Because the graph is undirected, $(i,j) \in \mathcal{E}$ also implies $(i,j) \in \mathcal{E}$. 
The agents collectively solve the unconstrained distributed optimization problem
\begin{equation}\label{eqn:quadratic}
\min_{x\in \mathcal{X}} f(x) = \sum_{i=1}^n f_i(x),
\end{equation}
where each local objective function $f_i(\cdot)$ is convex and smooth and $\mathcal{X}$ is the feasible set. 
To solve the optimization problem using the gradient descent method, each agent $i$ maintains a local copy $x_i\in \mathbb{R}^p$ of the decision variable $x\in \mathbb{R}^p$ and carries out a local update using their local cost function and broadcast the same to their neighbors:
\begin{equation}\label{eqn:gradient}
    x_i(k+1) = x_i(k) - \alpha_i \nabla f_i(x(k)),
\end{equation}
where $\alpha_{i} \in \mathbb{R}_+$ is an appropriately chosen step size. It is known that if $f_i$  is convex and differentiable, with an appropriately chosen step size $\alpha_{i}$ the updates in Equation \ref{eqn:gradient} will converge to the optimal solution \cite{nesterov2013introductory}. The problem set-up considers two cases -- the complete graph and the non-complete graph case. In the complete graph case, the malicious agents are assumed to know each other and coordinate to choice of an attack vector. We assume the adversarial agents are not known to the rest of the network \textit{a priori}. The objective of the adversarial or malicious nodes is to distort the network from reaching the true optimal solution $x^*$ of Problem \ref{eqn:quadratic}. 
To accomplish the malicious objective, rather than follow the update in Equation \ref{eqn:gradient}, the adversarial nodes perturb their local estimates with an attack vector $\epsilon \in \mathbb{R}^p$:
\begin{equation}\label{eqn:gradientmalicious}
x_i(k+1) = x_i(k) - \alpha_{i} \nabla f_i(x(k)) + \epsilon(k),
\end{equation}
before broadcasting their estimates to neighboring agents in the network. We note that an alternative formulation is to assume a different objective function 
\begin{equation} \label{eqn:maliciousminimization}
    \min_x \hat{f}(x)
\end{equation}
for the adversarial agents such that the optimal solution to $\hat{f}(x)$ is  $x^a = x^* + \hat{\epsilon}$, where $x^*$ is the optimal solution to the objective function $f(x)$. 
We assume the adversarial agents carefully pick values of $\epsilon$ by which to perturb their local estimates so that they remain undetected; and the non-adversarial nodes do not know which of their neighbors are malicious.

Next, we analyze convergence of the distributed gradient-based method to solve Problem \eqref{eqn:quadratic} using the update in \eqref{eqn:gradient} when there are malicious agents. 
Before proceeding, however, we note the assumptions being made on Problem \eqref{eqn:quadratic}.

\begin{assumption}\label{assume-bounded-G}
The decision set of agents in the network $\mathcal{X}$ is bounded. This means there exists some positive constant $0 \leq B < \infty$ such that $|\mathcal{X}|\leq B$.
\end{assumption}
\begin{assumption}\label{Assumption1}
The cost function $f(x)$ in Problems \eqref{eqn:quadratic} is strongly convex and twice differentiable.
This implies that for any vectors $x, y \in \mathbb{R}^{p}$, there exists $\mu\in\mathbb{R}_+$ such that:
\begin{equation*}
    f(x)\geq f(y)+\nabla f(y)^{T}(x-y)+\frac{\mu}{2}\|x-y\|^2.
\end{equation*}
\end{assumption}

\begin{assumption}\label{Assumption4}
The gradient of the objective function $\nabla f$ is Lipschitz continuous. This implies that for all vectors $x,y \in\mathbb{R}^n$, there exists a constant $L\in\mathbb{R}_+$ such that:
\begin{equation*}
   \|\nabla f(x)-\nabla f(y)\|\leq L\|x-y\|.
\end{equation*}
\end{assumption}
These assumptions are standard in the distributed optimization literature, as they allow for analysis. 

\section{Convergence Analysis}\label{sec:Convanalysiscentralized-BB}
We will characterize convergence for the problem and attack model presented in Section \ref{sec:problemformulation}, based on the distributed gradient descent algorithm and agreement updates. Each agent $i$ updates their local estimate $x_i$ and takes a weighted average of neighboring nodes following 
\begin{equation}\label{concatenation11}
x_i\left(k+1\right) = \sum_{j\in N_i\cup\{i\}} W_{ij}\left(x_j\left(k\right)-\alpha_{i}\nabla f_i\left(x_i\left(k\right)\right)\right),
\end{equation}
where $W$ is an $n$-dimensional square weighting matrix comprising entries $W_{ij}$ that denote the weight attached to agent $j$'s estimate by agent $i$. 

Let  $X=[x_1; \ x_2; \ \hdots \ x_n]^T \in \mathbb{R}^{np}$
be the concatenation of the local variables $x_i$, $I_p$ be the identity matrix whose dimension is $p$,  $\otimes$ be the Kronecker operation, $1_n$ be an $n$ dimensional vector of ones and let $W$ be doubly stochastic.
We can express equation \eqref{concatenation11} more compactly as:
\begin{equation}\label{concatenation}
    X(k+1)=(W \otimes I_p)X(k)-\alpha_i \nabla f(X(k)),
\end{equation}
where $W\otimes I_p \in \mathbb{R}^{np\times np}$, and $\nabla f(X(k))\in \mathbb{R}^{np}$ is the gradient of the objective $f(\cdot)$ evaluated at $X(k)$. The doubly stochastic matrix $W$ has one eigenvalue $\lambda=1$ and the other eigenvalues satisfy  $0<\lambda<1$. 

\subsection{Convergence Analysis over a Complete Graph}\label{sec:convanalysisfirststepsize}
To characterize convergence in the complete graph case, we introduce some additional notation to be used in the analyses. Let the average of local estimates be $\overline{x}(k)$ and the average of local gradients at current estimates be $\overline{g}(k)$; that is,
\[
\overline{x}(k) = \frac{1}{n}\sum_{i=1}^n x_i(k), \quad \text{and} \quad \overline{g}(k) = \frac{1}{n} \sum_{i=1}^n \nabla f_i(x_i(k)).
\]
where $\overline{x}(k)\in \mathbb{R}^{p}$ and $\overline{g}(k)\in \mathbb{R}^{p}$. 
  Based on the definition of $\overline{X}(K)$, let $\overline{X}(k)$ be such that $\overline{X}(k)=[\overline{x}(k),.....\overline{x}(k)]\in \mathbb{R}^{np}$, then according to Lemma IV.2 in  \cite{berahas2018balancing}, we have the following:
\begin{equation*}
\overline{X}(k)=\frac{1}{n}((1_n1_n^{T})\otimes I)X(k).
\end{equation*}
Since $W$ is doubly stochastic, we have:
\begin{equation}\label{averagedconcatenation}
  \overline{X}(k+1)=\frac{1}{n}((1_n1_n^{T})\otimes I)X(k).
\end{equation}
%
From Equation \eqref{averagedconcatenation} as proved in \cite{berahas2018balancing}, the consensus update can be expressed as
\[
\overline{x}(k+1)=\overline{x}(k)-\alpha \overline{g}(k).
\]
For the complete graph case, since the malicious agents are aware of one another and can cooperate, collectively deciding on the degree $\epsilon$ to which they want to perturb their local estimates for their adversarial goal. Therefore, all malicious agents choose the same $\epsilon \in \mathbb{R}^p$.
In our first result, we derive the condition under which convergence to a neighborhood of the optimal solution may be attained. As we will see, the size of the neighborhood, amongst others depends on the magnitude of the attack vector $\epsilon$.

\begin{lemma}\label{lemmacentralized3}
Suppose Assumptions \ref{assume-bounded-G}, \ref{Assumption1} and \ref{Assumption4} hold and let the perturbation parameter  $\epsilon>0$ be given. If the average initial value of of the agents when malicious agents are present satisfy $\|\overline{x}(0)-x^*\| <\|\epsilon\|$ and the step size $\alpha$ satisfies
 \[
 \alpha < \frac{2}{\mu + L} < \frac{1}{\mu} \quad \text{and} \quad \frac{\mu + L}{4\mu L} < \alpha < \frac{\mu + L}{2\mu L},
 \]
then the iterates generated converge to a neighborhood of the optimal solution, $x^*$; where $\mu$ and $L$ are the strong convexity parameter and Lipschitz constant of the objective function and its gradient respectively with $\mu \leq L$. 
\end{lemma}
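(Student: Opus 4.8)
The plan is to collapse the perturbed distributed dynamics into a one-dimensional contraction-plus-noise recursion in $\|\overline{x}(k)-x^*\|$ and then unroll it. First I would write down the averaged iterate produced by the agreement step \eqref{concatenation11} together with the adversarial perturbation \eqref{eqn:gradientmalicious}. On a complete graph with doubly stochastic $W$ the consensus term collapses as in \eqref{averagedconcatenation}, and since all adversaries share a common $\epsilon$, the averaged estimate obeys
\[
\overline{x}(k+1) = \overline{x}(k) - \alpha\,\overline{g}(k) + \epsilon_{\mathrm{eff}},
\]
where $\epsilon_{\mathrm{eff}}$ is the aggregate (averaged) perturbation injected by the malicious agents, with $\|\epsilon_{\mathrm{eff}}\|$ proportional to $\|\epsilon\|$. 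Using $\sum_i f_i = f$ and the fact that on the complete graph the local copies track their average, I would replace $\overline{g}(k)$ by the global gradient $\nabla f$ evaluated at $\overline{x}(k)$ (the $1/n$ normalization being folded into an effective step size), so the recursion is exactly a perturbed gradient step on the strongly convex objective $f$.

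The core estimate is the standard contraction bound for gradient descent on a strongly convex, $L$-smooth function. Writing $\overline{x}(k+1)-x^*-\epsilon_{\mathrm{eff}} = \overline{x}(k)-x^*-\alpha\nabla f(\overline{x}(k))$, expanding the square, and invoking the co-coercivity inequality implied by Assumptions \ref{Assumption1} and \ref{Assumption4} (evaluated at $x^*$ with $\nabla f(x^*)=0$),
\[
\nabla f(x)^{T}(x-x^*) \ge \frac{\mu L}{\mu+L}\|x-x^*\|^2 + \frac{1}{\mu+L}\|\nabla f(x)\|^2,
\]
yields
\[
\|\overline{x}(k+1)-x^*-\epsilon_{\mathrm{eff}}\|^2 \le \Big(1-\tfrac{2\alpha\mu L}{\mu+L}\Big)\|\overline{x}(k)-x^*\|^2 + \Big(\alpha^2-\tfrac{2\alpha}{\mu+L}\Big)\|\nabla f(\overline{x}(k))\|^2.
\]
Here the two step-size conditions play distinct roles: $\alpha < 2/(\mu+L)$ forces the gradient-squared coefficient to be nonpositive so it can be dropped, while the sandwich $\frac{\mu+L}{4\mu L} < \alpha < \frac{\mu+L}{2\mu L}$ keeps the contraction factor $\rho := \sqrt{1-2\alpha\mu L/(\mu+L)}$ strictly inside $(0,1/\sqrt{2})$.

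Taking square roots and reintroducing the perturbation by the triangle inequality gives the scalar recursion $\|\overline{x}(k+1)-x^*\| \le \rho\,\|\overline{x}(k)-x^*\| + \|\epsilon_{\mathrm{eff}}\|$, and iterating this geometric recursion from the initial condition produces
\[
\|\overline{x}(k)-x^*\| \le \rho^{k}\|\overline{x}(0)-x^*\| + \frac{1-\rho^{k}}{1-\rho}\|\epsilon_{\mathrm{eff}}\|.
\]
Because $\rho<1$, the transient vanishes and the iterates settle into a ball of radius $\|\epsilon_{\mathrm{eff}}\|/(1-\rho)$ about $x^*$, a neighborhood whose size is governed by the attack magnitude, as claimed. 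The hypothesis $\|\overline{x}(0)-x^*\| < \|\epsilon\|$ enters to certify that the initial error is already dominated by the perturbation, so that the limiting neighborhood, rather than the initial displacement, dictates the asymptotic behavior and the adversaries provably keep the network away from $x^*$.

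The step I expect to be the main obstacle is the reduction from $\overline{g}(k)$ to $\nabla f(\overline{x}(k))$: away from exact consensus there is a discrepancy $\overline{g}(k)-\tfrac{1}{n}\nabla f(\overline{x}(k))$ that must be bounded, and controlling it rigorously requires either exploiting the fast consensus of the complete-graph weight matrix (its spectral gap) or invoking Assumption \ref{assume-bounded-G} to keep the iterates and gradients bounded. Pinning down the precise form of $\epsilon_{\mathrm{eff}}$ in terms of the number of adversaries and the averaging weights, and verifying that the stated numeric constants in the $\alpha$-interval are exactly those needed to place $\rho$ in $(0,1/\sqrt{2})$, is the other place where care is needed.
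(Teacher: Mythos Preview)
Your approach matches the paper's: reduce to the averaged iterate, shift by the perturbation to isolate a pure gradient step, apply the Nesterov co-coercivity inequality with constants $c_1=2/(\mu+L)$ and $c_2=2\mu L/(\mu+L)$, use the step-size hypotheses to make the gradient-squared coefficient nonpositive and the contraction factor lie in $(0,1)$, and then unroll the resulting contraction-plus-noise recursion. Your execution is in fact a bit cleaner than the paper's---the paper expands all cross terms with $\epsilon$ and bounds them via $2a^{T}b\le\|a\|^2+\|b\|^2$, which produces the looser contraction factor $2-2\alpha c_2$ (this is precisely why the lower bound $\alpha>\tfrac{\mu+L}{4\mu L}$ is needed there), and it reabsorbs $\epsilon$ via a sign-based inequality rather than your triangle-inequality step; the obstacle you flag about replacing $\overline{g}(k)$ by $\nabla f(\overline{x}(k))$ is one the paper simply sidesteps by applying the co-coercivity bound directly to $\overline{g}(k)$.
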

\begin{proof}
The iterative equation solution for a distributed gradient descent is:
\[X(k) = -\alpha\sum_{s=0}^{k-1} \left(W^{(k-1-s)} \otimes I\right)\nabla f\left(x(s)\right),\]
which accounts for the consensus step as well. The relationship between the optimal solution $x^*$ and the desired malicious solution $x^a$ of the adversarial agents can be expressed as: $x^a=x^*+\epsilon$. 
When malicious agents are present, we have the following update:
\[\overline{x}(k+1)  - x^a = \overline{x}(k)  - \alpha \overline{g}(k) - x^a.\]
We now have the iterate equation as:
\[\overline{x}(k+1) - \left(x^* +\epsilon\right) = \overline{x}(k) - \left(x^* + \epsilon\right) - \alpha \overline{g}(k).\]
To analyze convergence of the iterates to the optimal solution, we will begin by considering the iterative equation. We can express
$\|\overline{x}(k+1) - x^* - \epsilon\|^2$ as:
\begin{equation*}
   \begin{aligned}
   \| \overline{x}(k+1) - x^* - \epsilon\|^2 ={} &  \| \overline{x}(k) - x^* - \epsilon - \alpha \overline{g}(k)\|^2, \\
   ={} &\| \overline{x}(k) - x^*\|^2 + \|\epsilon\|^2 + \alpha^2\|\overline{g}(k)\|^2 \\
     & +2\epsilon(\alpha \overline{g}(k)-(\overline{x}(k) - x^*) ) \\
    &-2 (\overline{x}(k) - x^* )^{T}(\alpha\overline{g}(k)).
   \end{aligned}
\end{equation*}
 By using vector norm principle, we know that for vectors $a$, $b$, the inequality $2 a^{T}b \leq \| a\|^2 + \| b\|^2$ is satisfied.
By similarly applying vector norm principles, we have the following:
\begin{equation*}
    \begin{aligned}
    2\epsilon (\alpha \overline{g}(k) - (\overline{x}(k)-x^*)) \leq{} &
    \|\epsilon\|^2 +\|\alpha \overline{g}(k) - (\overline{x}(k)-x^*)\|^2, \\
    ={} & \|\epsilon\|^2+\|\alpha  \overline{g}(k)\|^2+\|\overline{x}(k)-x^*\|^2  \\ 
    &-2 (\overline{x}(k) - x^* )^{T}\\
    \leq{} & \|\epsilon\|^2+\alpha^2\|\overline{g}(k)\|^2+\|\overline{x}(k)-x^*\|^2\\
    &- \alpha c_1\|\overline{g}(k)\|^2-\alpha c_2\|\overline{x}(k)-x^*\|^2,
    \end{aligned}
\end{equation*}
where the values of $c_1$ and $c_2$ are \cite{nesterov2013introductory}:
$$c_1= \frac{2}{\mu + L}\quad \text{and} \quad  c_2=\frac{2\mu L}{\mu +L}. $$ 
By using strong convexity of the objective function we obtain (Theorem 2.1.12 in \cite{nesterov2013introductory}):

\begin{equation}
\begin{aligned}
\| \overline{x}(k+1) - x^* - \epsilon \|^2 ={} & \| \overline{x}(k) - x^* - \epsilon - \alpha \overline{g}(k)\|^2, \\
\leq{} &\| \overline{x}(k) - x^*\|^2 + \| \epsilon\|^2 + \alpha^2\| \overline{g}(k)\|^2 \\ 
      & +\|\epsilon\|^2 + \alpha^2\| \overline{g}(k)\|^2 {+} \| \overline{x}(k) {-} x^*\|^2 \\
      & - \alpha c_1\|\overline{g}(k)\|^2 -\alpha c_2\|\overline{x}(k){-}x^*\|^2 \\
      & - \alpha c_1\|\overline{g}(k)\|^2-\alpha c_2\|\overline{x}(k)-x^*\|^2,\\
      ={} & (2-2\alpha c_2)\| \overline{x}(k) - x^*\|^2 \\ 
&+ (2\alpha^2-2\alpha c_1)\|\overline{g}(k)\|^2 + 2\|\epsilon\|^2.
\label{eq:bound-analyzed}
\end{aligned}
\end{equation}
In what follows we would show that the terms in the right hand side of Equation \eqref{eq:bound-analyzed} does not grow unbounded and is, in fact, related to the initial iterates and magnitude of the malicious attack. 
Clearly $\|\epsilon\|^2$ is positive and $(2\alpha^2-2\alpha c_1)$ is negative when $\alpha<c_1$.
Now we will show that $(2-2\alpha c_2)>0$ by equivalently showing that $\alpha c_2<1$.
\newline
By using the value $c_2=\frac{2\mu L}{\mu +L}$, we obtain:
\begin{equation*}
    \alpha c_2=\frac{2\alpha\mu L}{\mu +L}.
\end{equation*}
Since $\alpha <\frac{1}{\mu}$, then we obtain: 
\begin{equation*}
   \alpha c_2<\frac{1}{\mu}\frac{2\mu L}{\mu +L}=\frac{2L}{\mu +L}.
\end{equation*}
We know that both $L$ and $\mu$ are positive and $\mu \leq L$. Therefore if $\mu =L$, then, $2L/(\mu +L)=1$. So we obtain the fact that $ \alpha c_2<1.$
We have now affirmed that $(2-2\alpha c_2)>0$. Moreover, if $\alpha c_2 > \frac{1}{2}$, then we obtain that $1-\alpha c_2 < \frac{1}{2}$ and we obtain that $2-2\alpha c_2 < 1$. Therefore by using the condition:
\begin{equation*}
   0< 2-2\alpha c_2 < 1
\end{equation*}
the left hand side of Equation \eqref{eq:bound-analyzed} can be upper bounded by
\begin{equation}\label{eqn:upperbound11}
   \| \overline{x}(k+1)-x^*-\epsilon \|^2\leq(2-2\alpha c_2)\| \overline{x}(k)-x^*\|^2+2\|\epsilon\|^2.
\end{equation}
If $\overline{x}(k+1) - x^* < 0 <\epsilon$, then we have the result:
\begin{equation}\label{eqn:upperbound12}
 \| \overline{x}(k+1) - x^* - \epsilon\|^2 > \| \overline{x}(k+1) - x^* \|^2.
\end{equation}
From equations \eqref{eqn:upperbound11} and \eqref{eqn:upperbound12}, we obtain the following relationship: 
\begin{equation*}
\|\overline{x}(k+1)-x^*\|^2\leq (2-2\alpha c_2)\| \overline{x}(k)-x^*\|^2+2\|\epsilon\|^2.
\end{equation*}
By recursion we obtain:
\begin{equation*}
\| \overline{x}(k)-x^*\|^2
\leq (2-2\alpha c_2)^k\| \overline{x}(0)-x^*\|^2+2\|\epsilon\|^2,
\end{equation*}
from which we conclude
\begin{equation}\label{leftandrightbounds1}
\| \overline{x}(k)-x^*\|\leq (2-2\alpha c_2)^{\frac{k}{2}}\| \overline{x}(0)-x^*\|+\sqrt{2}\|\epsilon\|.
\end{equation}
Therefore, the iterates converge to the neighborhood of the optimal solution, $x^*$.
\end{proof}
The central idea in Lemma \ref{lemmacentralized3} is that the average initial value of the agents need to lie within $\epsilon$ of the optimal solution $x^*$ for the agents to converge to a neighborhood of the optimal solution in the presence of malicious agents. 
Knowledge of the average initial starting value is also critical for the adversarial nodes, because their choice of $\epsilon$ could depend on the initial average value of $\overline{x}$. 
The compete graph case in Lemma \ref{lemmacentralized3} also allows for the malicious agents who know one another to cooperate in choosing the attack vector or perturbation parameter $\epsilon$. Next, we consider a general case where cooperation is not as easy because of the subset of malicious agents may not be neighbors.

\subsection{Convergence Analysis over General Graph Structures}
  We consider the case in which the communication structure is more general, as opposed to being a complete graph. With a general structure, malicious agents do not necessarily have the liberty to cooperate and agree on values for the attack vector $\epsilon$, since they may not be adjacent to one another in the network. In other words, each regular agent independently solves the minimization problem \eqref{eqn:quadratic} with the malicious agents additively 
  perturbing their local estimates by the attack vector $\epsilon_i$. 
 We will now examine the conditions on the attack parameter that enables convergence when non-adversarial and malicious agents are present in a general graph structure.

We will now show conditions on $\epsilon$ that enable neighborhood convergence of iterates to the optimal point.
\begin{lemma}\label{lemmaindividualagent2}
Suppose Assumptions \ref{assume-bounded-G}, \ref{Assumption1} and \ref{Assumption4} hold, and let $\epsilon \succeq 0$. If $\|x_i(0)-x^*\| <0 <\epsilon_i \ \forall i$ and the step size $\alpha$ satisfies
 \[
 \alpha < \frac{2}{\mu + L} < \frac{1}{\mu} \quad \text{and} \quad \frac{\mu + L}{4\mu L} < \alpha < \frac{\mu + L}{2\mu L},
 \]
then the individual iterates generated converge to the neighborhood of the optimal solution, $x^*$, where $\mu$ and $L$ are the strong convexity parameter and Lipschitz constant of the objective function and its gradient respectively with $\mu \leq L$.
\end{lemma}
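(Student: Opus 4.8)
The plan is to replicate, at the level of each individual agent, the Lyapunov-style contraction argument that established Lemma \ref{lemmacentralized3}. Since the malicious agents can no longer coordinate on a common attack vector, the analysis must now be carried out on each local error $x_i(k)-x^*$ separately rather than on the averaged quantity $\overline{x}(k)-x^*$, and the perturbed target for agent $i$ becomes $x^*+\epsilon_i$ in place of the common $x^*+\epsilon$. Concretely, I would start from the individual perturbed update and expand
\[
\|x_i(k+1)-x^*-\epsilon_i\|^2 = \|x_i(k)-x^*-\epsilon_i-\alpha\nabla f_i(x_i(k))\|^2,
\]
grouping the cross terms exactly as in the complete-graph proof.

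Next I would bound the mixed term $2\epsilon_i^{T}(\alpha\nabla f_i(x_i(k))-(x_i(k)-x^*))$ using the elementary inequality $2a^{T}b\leq\|a\|^2+\|b\|^2$, and bound the gradient inner product using the strong-convexity/Lipschitz coefficients $c_1=\tfrac{2}{\mu+L}$ and $c_2=\tfrac{2\mu L}{\mu+L}$ from Theorem 2.1.12 of \cite{nesterov2013introductory}. This yields the per-agent analogue of Equation \eqref{eq:bound-analyzed}, namely
\[
\|x_i(k+1)-x^*-\epsilon_i\|^2 \leq (2-2\alpha c_2)\|x_i(k)-x^*\|^2 + (2\alpha^2-2\alpha c_1)\|\nabla f_i(x_i(k))\|^2 + 2\|\epsilon_i\|^2.
\]
The two step-size conditions are exactly what turn this into a contraction: $\alpha<c_1=\tfrac{2}{\mu+L}$ forces the gradient coefficient $2\alpha^2-2\alpha c_1$ negative so that term may be dropped, while $\tfrac{\mu+L}{4\mu L}<\alpha$ (equivalently $\alpha c_2>\tfrac12$) together with $\alpha<\tfrac1\mu$ (giving $\alpha c_2<1$) pins $2-2\alpha c_2$ strictly inside $(0,1)$, precisely as argued for the average in Lemma \ref{lemmacentralized3}. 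With the contraction factor secured, I would invoke the sign hypothesis $x_i(k+1)-x^*<0<\epsilon_i$ to obtain the per-agent version of Equation \eqref{eqn:upperbound12}, substitute to get $\|x_i(k+1)-x^*\|^2\leq(2-2\alpha c_2)\|x_i(k)-x^*\|^2+2\|\epsilon_i\|^2$, and then unroll the recursion and take square roots to reach $\|x_i(k)-x^*\|\leq(2-2\alpha c_2)^{k/2}\|x_i(0)-x^*\|+\sqrt{2}\|\epsilon_i\|$, so each agent settles into a neighborhood of $x^*$ whose radius is controlled by its own $\|\epsilon_i\|$.

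The step I expect to be the genuine obstacle is justifying the decoupling of the per-agent recursion. Unlike the complete-graph case, where the doubly stochastic averaging in Equation \eqref{averagedconcatenation} collapsed the consensus term and produced the clean update $\overline{x}(k+1)=\overline{x}(k)-\alpha\overline{g}(k)$, the general update in Equation \eqref{concatenation11} couples $x_i(k+1)$ to every neighbor $x_j(k)$ through the weights $W_{ij}$. To write the single-agent recursion above one cannot simply treat $x_i$ as running its own isolated gradient descent; one must either absorb the consensus mixing into the contraction, arguing via $W$ being doubly stochastic together with the triangle/Jensen inequality that the weighted average of neighboring errors does not inflate the local error, or else carry an explicit consensus-error term and show the admissible step-size range keeps it uniformly bounded. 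I would make this mixing estimate precise \emph{first}, and only then run the norm expansion, since without it the reduction to the scalar recursion is not automatic and the clean geometric bound would not follow.
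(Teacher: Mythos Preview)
Your proposal matches the paper's proof step for step: replace $\overline{x}$ by $x_i$ and $\epsilon$ by $\epsilon_i$, expand the squared norm, apply $2a^{T}b\leq\|a\|^2+\|b\|^2$ and the Nesterov coercivity bound with $c_1,c_2$, use the step-size hypotheses to place $2-2\alpha c_2\in(0,1)$ and drop the gradient term, invoke the sign condition to pass from $\|x_i(k+1)-x^*-\epsilon_i\|$ to $\|x_i(k+1)-x^*\|$, and unroll to $\|x_i(k)-x^*\|\leq(2-2\alpha c_2)^{k/2}\|x_i(0)-x^*\|+\sqrt{2}\|\epsilon_i\|$. The consensus-mixing obstacle you flag is not handled in the paper's proof at all: it simply opens with ``each agent is individually solving its own problem'' and writes the decoupled recursion $x_i(k+1)-x^*-\epsilon_i = x_i(k)-x^*-\epsilon_i-\alpha g(k)$ directly, so the extra mixing estimate you propose to carry out first is absent from (and not required by) the paper's argument.
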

\begin{proof}
The proof is similar to the one in Lemma \ref{lemmacentralized3} except that in this scenario, each agent is individually solving its own problem. In this case, the malicious agents are not cooperating to coordinate the attack vector $\epsilon$. We begin with the iterate equation:
\begin{equation}
   \begin{aligned} \label{eq:individual-distance}
   \| x_i(k+1) - x^* - \epsilon_{i}\|^2 ={} &  \| x_i(k) - x^* - \epsilon_{i} - \alpha g(k)\|^2, \\
   ={} &\| x_i(k) - x^*\|^2 + \|\epsilon_{i}\|^2 + \alpha^2\|g(k)\|^2 \\
     & +2\epsilon_{i}(\alpha g(k)-(x_i(k) - x^*) ) \\
    &-2 (x_i(k) - x^* )^{T}(\alpha g(k)).
   \end{aligned}
\end{equation}
Leveraging the fact that for vectors $ a, b$, the inequality 
\[2 a^{T}b \leq \| a\|^2 + \| b\|^2\]
holds, we can further simplify the fourth summand in Equation \eqref{eq:individual-distance} as
\begin{equation*}
    \begin{aligned}
    2\epsilon_{i} (\alpha g(k) - (x_i(k)-x^*)) \leq{} &
    \|\epsilon_{i}\|^2 +\|\alpha g(k) - (x_i(k)-x^*)\|^2, \\
    ={} & \|\epsilon_{i}\|^2+\|\alpha  g(k)\|^2+\|x_i(k)-x^*\|^2  \\ 
    &-2 (x_i(k) - x^* )^{T}\\
    \leq{} & \|\epsilon_{i}\|^2+\alpha^2\|g(k)\|^2+\|x_i(k)-x^*\|^2\\
    &- \alpha c_1\|g(k)\|^2-\alpha c_2\|x_i(k)-x^*\|^2,
    \end{aligned}
\end{equation*}
where the values of $c_1$ and $c_2$ are respectively \cite{nesterov2013introductory}:
$$c_1= \frac{2}{\mu + L}\quad \text{and} \quad  c_2=\frac{2\mu L}{\mu +L}. $$ 
Hence, Equation \eqref{eq:individual-distance} can be upper bounded by:
\begin{equation*}
\begin{aligned}
\| x_i(k+1) - x^* - \epsilon_{i} \|^2 ={} & \| x_i(k) - x^* - \epsilon_{i} - \alpha g(k)\|^2, \\
\leq{} &\| x_i(k) - x^*\|^2 + \| \epsilon_{i}\|^2 + \alpha^2\| g(k)\|^2 \\ 
      & +\|\epsilon_{i}\|^2 + \alpha^2\| g(k)\|^2 {+} \| x_i(k) {-} x^*\|^2 \\
      & - \alpha c_1\|g(k)\|^2 -\alpha c_2\|x_i(k){-}x^*\|^2 \\
      & - \alpha c_1\|g(k)\|^2-\alpha c_2\|x_i(k)-x^*\|^2,\\
      ={} & (2-2\alpha c_2)\| x_i(k) - x^*\|^2 \\ 
&+ (2\alpha^2-2\alpha c_1)\|g(k)\|^2 + 2\|\epsilon_{i}\|^2.
\end{aligned}
\end{equation*}
Since $\|\epsilon_i\|^2$ is positive, the term $(2\alpha^2-2\alpha c_1)$ is negative when $\alpha <c_1$ and using the fact that $0<(2-2\alpha c_2)<1$, which we showed in Lemma \ref{lemmacentralized3}, we obtain
\begin{equation}\label{eqn:upperbound31}
\| x_i(k+1){-}x^*{-}\epsilon_i \|^2\leq (2{-}2\alpha c_2)\|x_i(k){-}x^*\|^2{+}2\|\epsilon_i\|^2.
\end{equation}
Since $x_i(k+1) - x^* < 0 <\epsilon_i$, we have that
\begin{equation}
 \| x_i(k+1) - x^* - \epsilon_i\|^2 > \| x_i(k+1) - x^* \|^2.\\
\label{eqn:upperbound32}
\end{equation}
From equations \eqref{eqn:upperbound31} and \eqref{eqn:upperbound32}, we obtain:
\begin{equation*}
\| x_i(k+1){-}x^*\|^2 \leq (2{-}2\alpha c_2)\| x_i(k){-}x^*\|^2{+}2\|\epsilon_i\|^2,
\end{equation*}
and by the recursive relationship, we obtain:
\begin{equation*}
\| x_i(k){-}x^*\|^2\leq (2{-}2\alpha c_2)^k\| x_i(0){-}x^*\|^2{+}2\|\epsilon_i\|^2,
\end{equation*}
from which we conclude that
\begin{equation*}
\| x_i(k){-}x^*\|\leq (2{-}2\alpha c_2)^{\frac{k}{2}}\| x_i(0){-}x^*\|{+}\sqrt{2}\|\epsilon_i\|.
\end{equation*}
Therefore, the individual iterates converge to the neighborhood of the optimal solution, $x^*$.
\end{proof}
Lemma \ref{lemmaindividualagent2} illustrates the deviations of individual agents from the optimal solution and the bound indicates the chosen attack vector affects the neighborhood of convergence. While in Lemma \ref{lemmacentralized3} allows adversarial agents to coordinate and use a uniform attack vector $\epsilon$, the result in Lemma does not require cooperation or the use of a uniform attack vector. 

\section{Numerical Experiments}\label{sec:Numerical}
In this section, we illustrate our theoretical results of Lemmas \ref{lemmacentralized3} and \ref{lemmaindividualagent2} over a network of $n= 10$ agents and $100$ iterations where the objective is to solve the unconstrained problem
\begin{equation}\label{eqn:simulationquadratic}
    f(x)=\frac{1}{2}x^Tx
\end{equation}
in a distributed way. Clearly the cost function is strongly convex with strong convexity parameter $\mu = 1$. Also, its gradient has Lipschitz continuity parameter $L = 1$.
By inspection, the optimal solution of Problem  \eqref{eqn:simulationquadratic} is $x^*=0$. 
We will show how the choices of attack vectors of different magnitudes and agents' initial estimates influence convergence to the neighborhood of the optimal solution. In the illustrations to follow, entries of the attack vector was drawn uniform distribution over the interval $(0,1)$. For the complete and general graph cases below, we use a step size of $\alpha = 0.6$.

\subsection{Complete Graph Case with Common Attack Vector}
We begin with the case when the communication network is a complete graph, the case in which the adversarial agents perturb their local iterates with a common attack vector. We define the error as the distance between the average iterate and the optimal solution and present the error convergence in Figure \ref{fig:iteration-8-non} to \ref{fig:iteration-1-non}. For the plot in Figure \ref{fig:iteration-1-non}, we assumed the number of non-adversarial nodes was $1$ with $9$ adversarial nodes.

In Figure \ref{fig:iteration-5-non}, we illustrate convergence of the error when there are $5$ adversarial and $5$ non-adversarial nodes in the $10$-node network. As can be observed, the gap between the upper bound of the error (that is the neighborhood of convergence), and the actual error obtained increased, indicating that with an increased number of non-adversarial nodes, a closer solution to the optimal solution is obtained. 
Figure \ref{fig:iteration-8-non} contains the plot for the scenario with $8$ non-adversarial nodes and $2$ adversarial nodes, which shows a further reduction in the actual error obtained. In the three figures, we can also observe that the average iterate of all agents in the network stays close to the optimal solution. And as the proportion of adversarial agents in the network increase, the average value moves away from zero.
\begin{figure}[h]
    \centering
    \includegraphics[width=8cm]{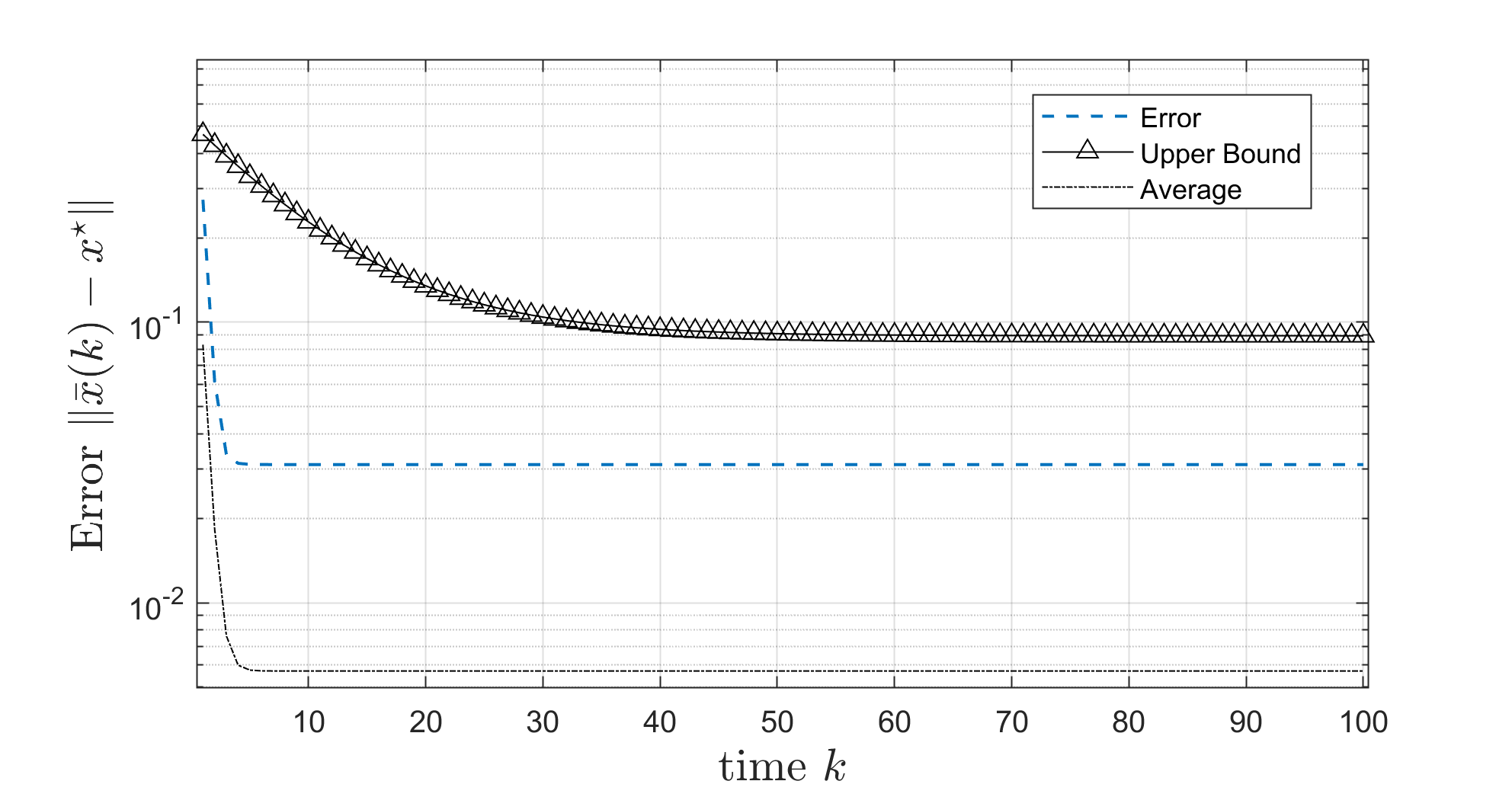}
    \caption{Simulations for $8$ non-adversarial agent and $2$ malicious agents.}
    \label{fig:iteration-8-non}
\end{figure}
\begin{figure}[h]
    \centering
    \includegraphics[width=8cm]{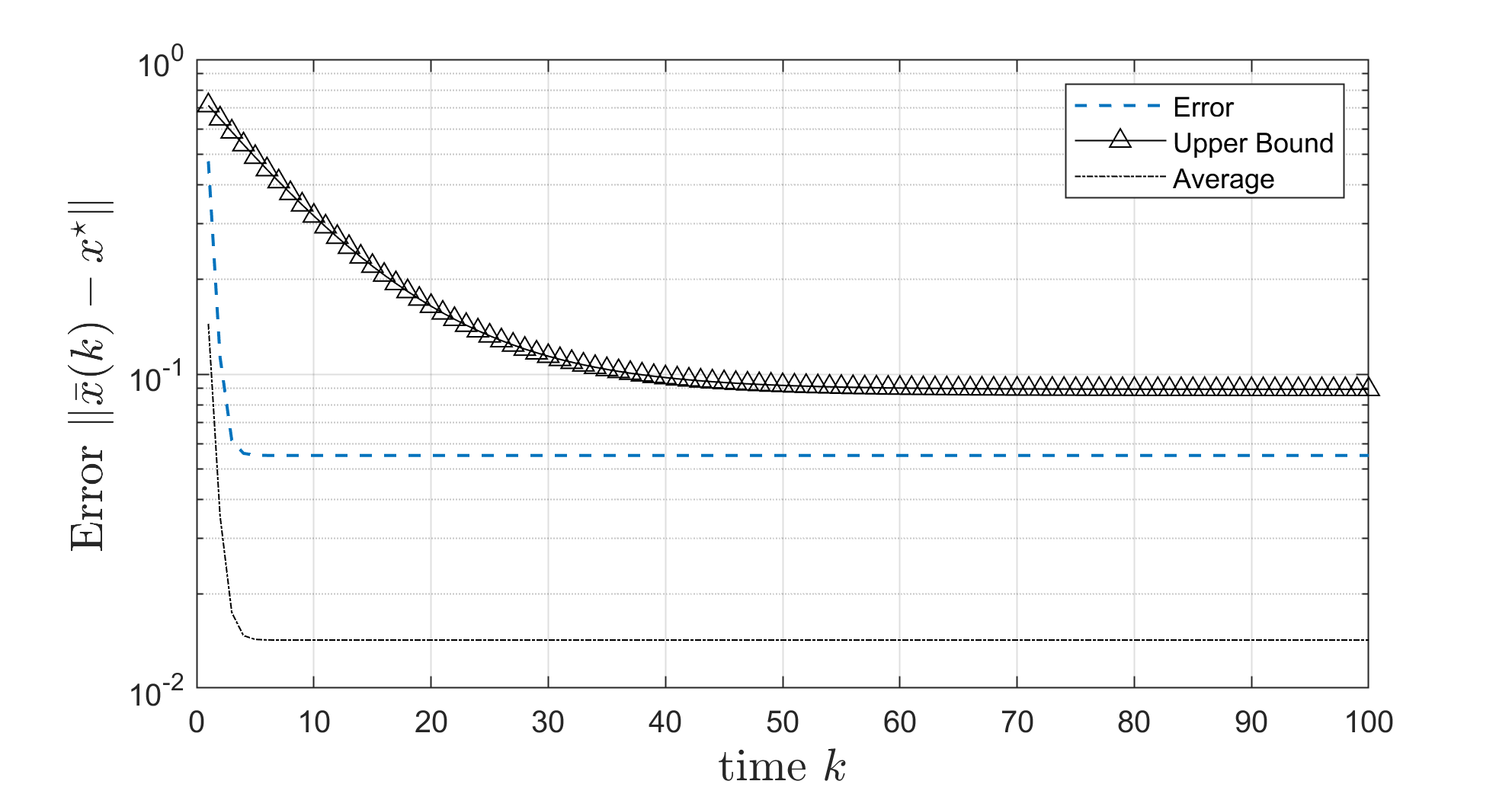}
    \caption{Simulations for $5$ non-adversarial agent and $5$ malicious agents.}
    \label{fig:iteration-5-non}
\end{figure}
\begin{figure}[h]
    \centering
    \includegraphics[width=8cm]{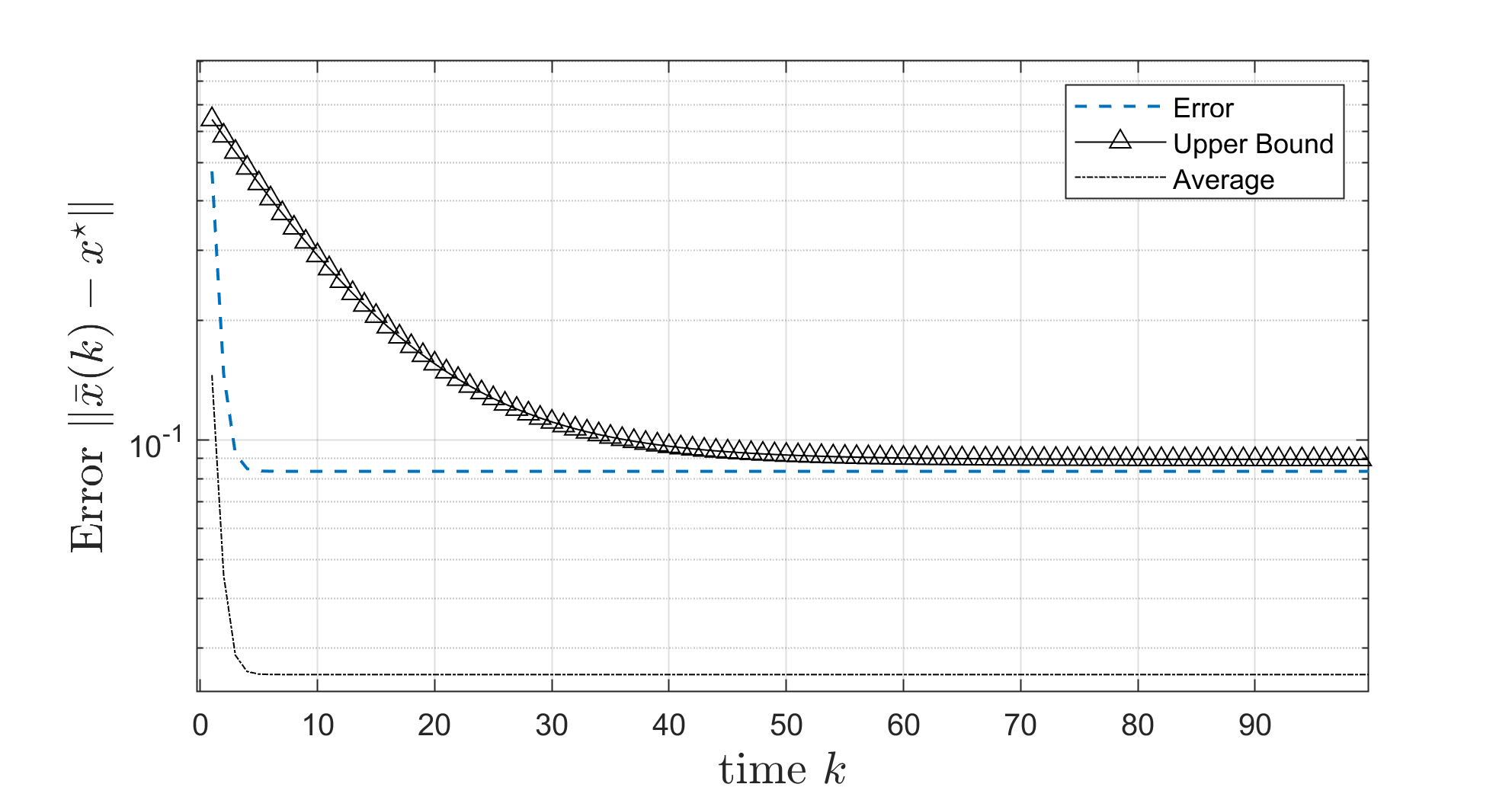}
    \caption{Simulations for $1$ non-adversarial agent and $9$ malicious agents.}
    \label{fig:iteration-1-non}
\end{figure}

\subsection{General Graph Case with Different Attack Vectors}
Similar illustrations are made in Figures \ref{fig:iteration-gen-8-non} to \ref{fig:iteration-gen-3-non}  where we vary the number of malicious nodes for the general (non-complete) graph case comprising $n=10$ agents solving Problem \eqref{eqn:simulationquadratic}. We also show the error evolution for different proportions of malicious to non-malicious nodes. In this case, each malicious node perturbs their local estimate with a different attack vector $\epsilon$ at each time step. Figure \ref{fig:iteration-gen-3-non} shows the case with $3$ non-adversarial and $7$ adversarial nodes. 
\begin{figure}[h]
    \centering
    \includegraphics[width=8cm]{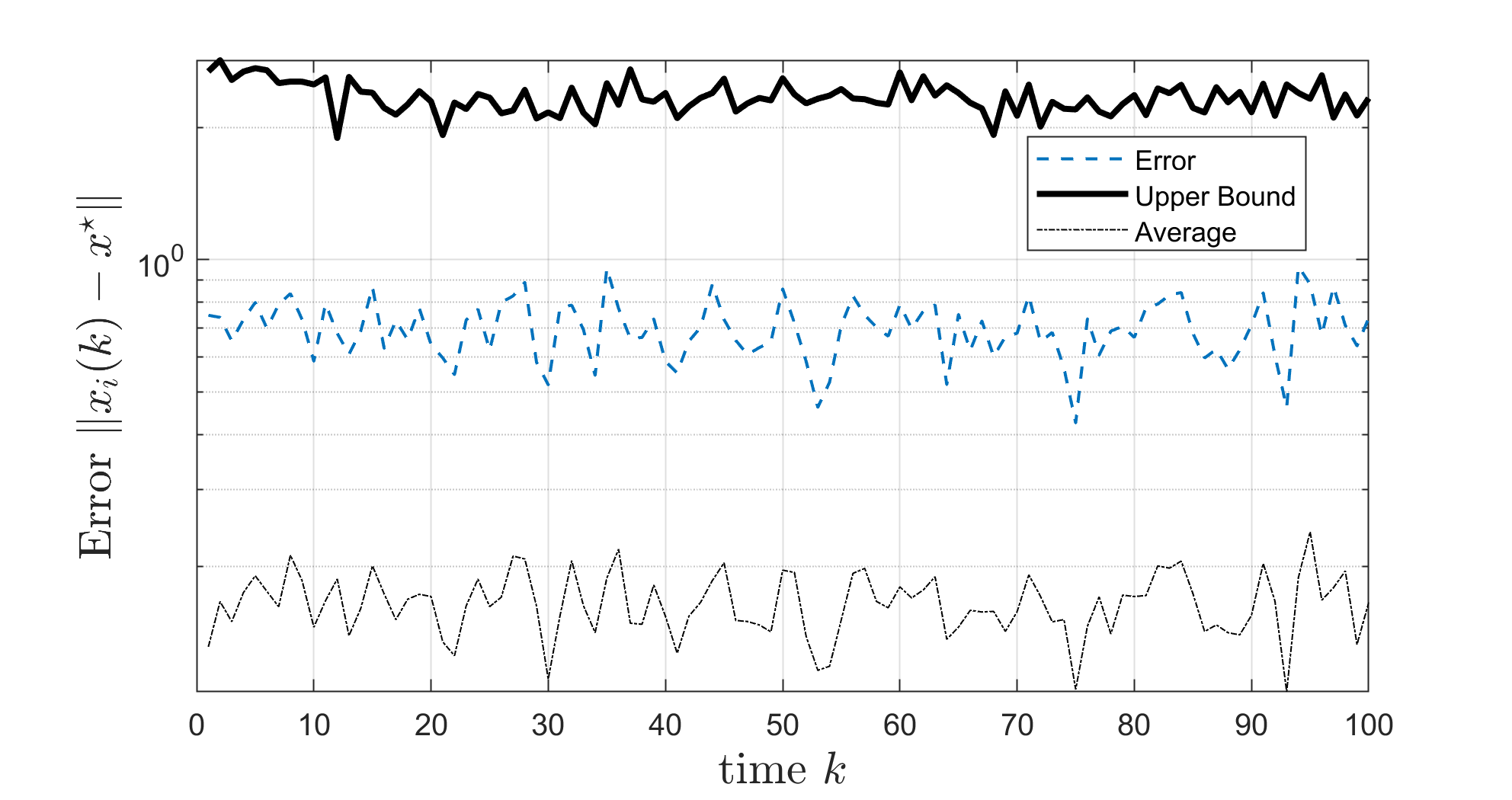}
    \caption{Simulations for $8$ non-adversarial agent and $2$ malicious agents.}
    \label{fig:iteration-gen-8-non}
\end{figure}
\begin{figure}[h]
    \centering
    \includegraphics[width=8cm]{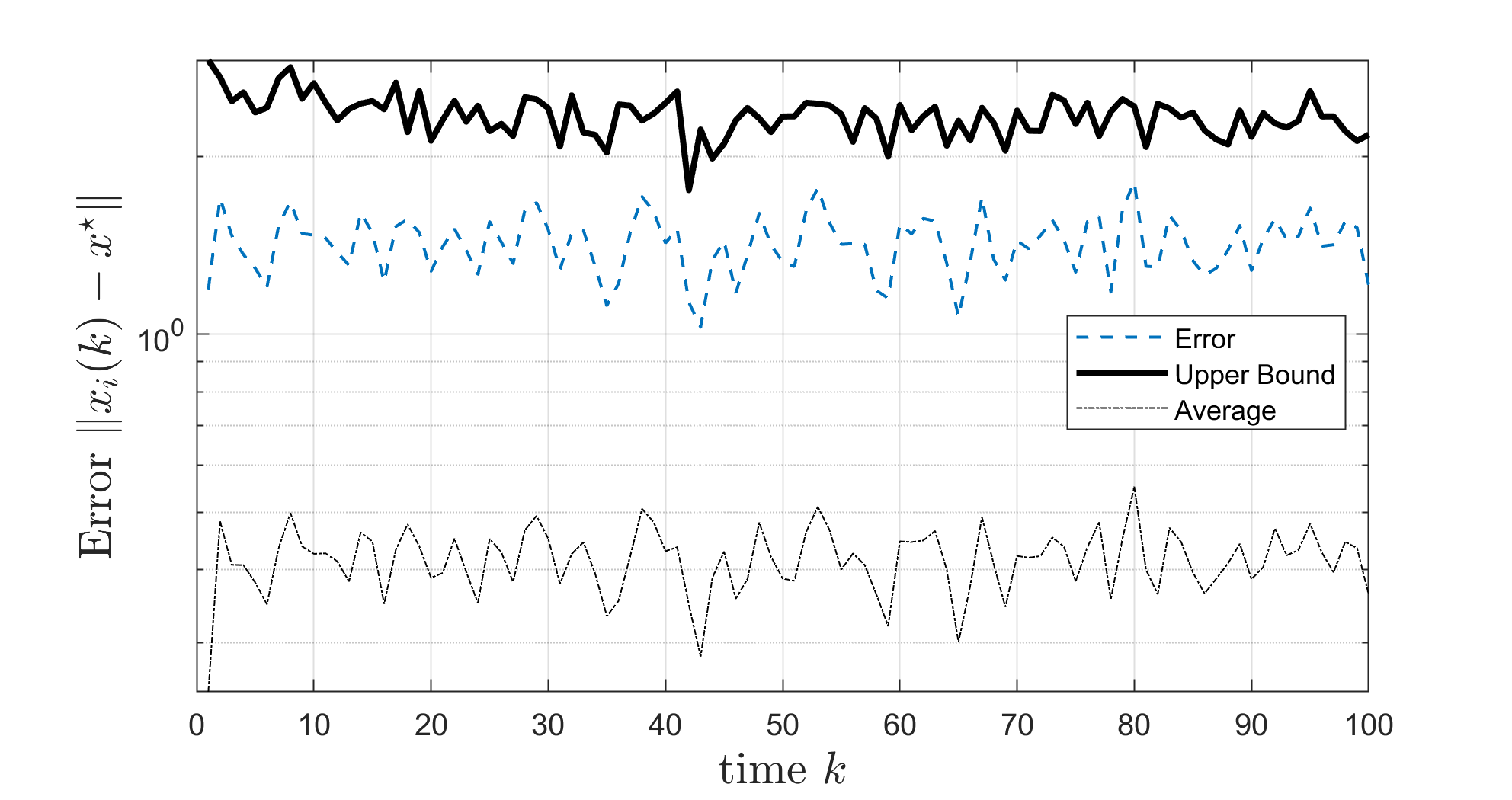}
    \caption{Simulations for $5$ non-adversarial agent and $5$ malicious agents.}
    \label{fig:iteration-gen-5-non}
\end{figure}
\begin{figure}[h]
    \centering
    \includegraphics[width=8cm]{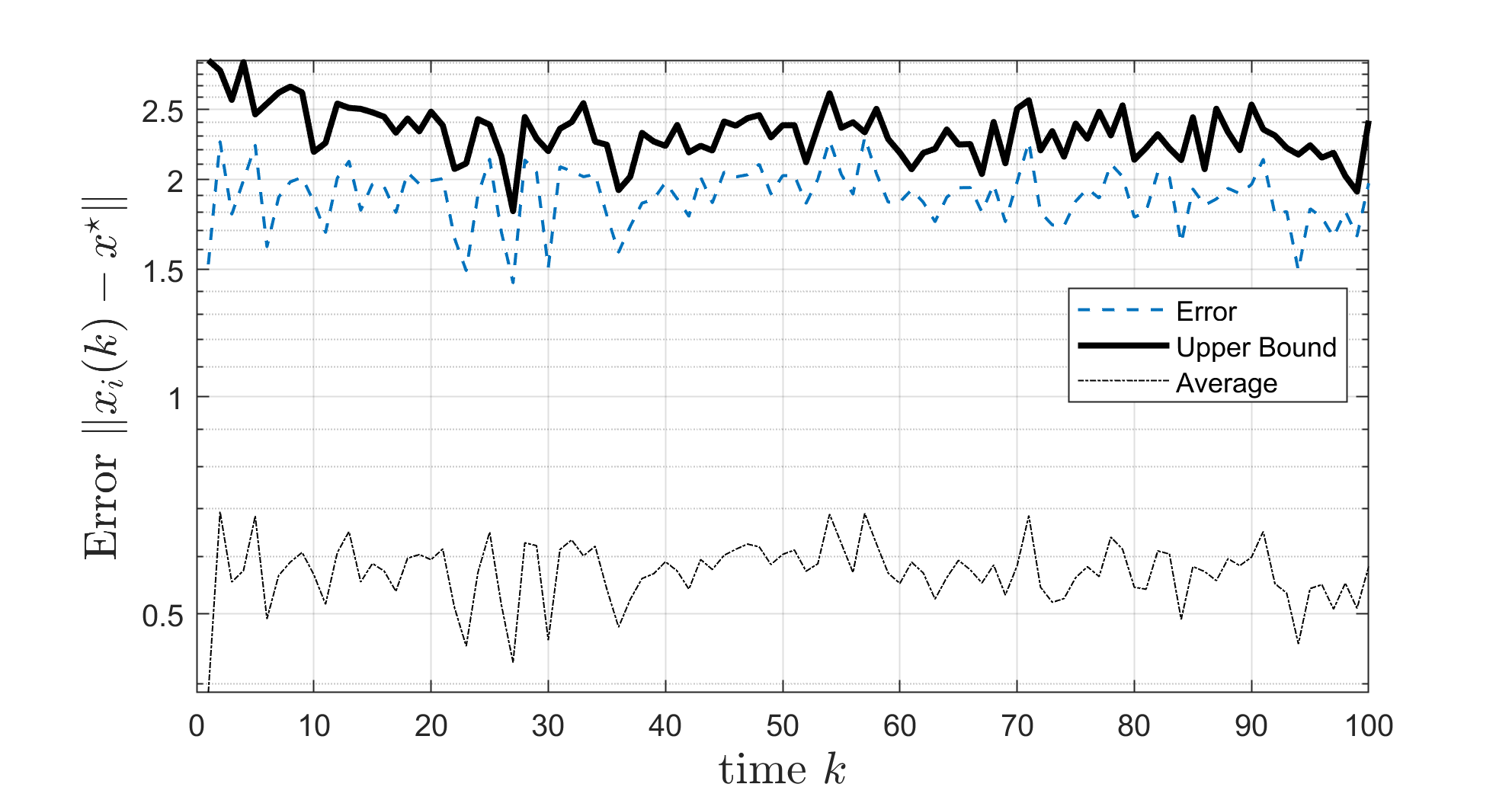}
    \caption{Simulations for $3$ non-adversarial agent and $7$ malicious agents.}
    \label{fig:iteration-gen-3-non}
\end{figure}
Figure \ref{fig:iteration-gen-5-non} shows the case with $5$ non-adversarial and $5$ adversarial nodes; and Figure \ref{fig:iteration-gen-8-non} shows the case comprising $8$ non-adversarial nodes and $2$ malicious nodes. From the figures, we can observe that as the ratio of malicious nodes in the network increases, the error increases towards the bound of the convergence neighborhood. In addition, the average iterates of agents in the network stays close to the optimal solution of zero. And as the proportion of adversarial nodes in the network increase, the average of the iterates move away from zero.  This outcome is intuitive and expected, since the presence of more agents causing disruption to the distributed consensus-based gradient algorithm would cause a greater deviation from the optimal solution. 


\section{Conclusions}
\label{sec:conclude}
This paper considered a distributed optimization problem over a network of agents in which some agents exude adversarial behavior by perturbing their local iterates of the decision variable before sharing it with neighboring agents at each time step. We established conditions needed for the iterates of the agents to converge to a neighborhood of the optimal solution;  and demonstrated our results via simulations. Convergence of the agents' iterate to a neighborhood of the optimal solution depended on not only an appropriate choice of the step size, but also on the distance between the initial iterate and the optimal solution being less than the attack vector. The simulations show that as the number of adversarial agents in the network increase, the convergence neighborhood increases towards the upper bound in the Lemmas 1 and 2.

\bibliographystyle{IEEEtran}
{\small
\bibliography{mybib1.bib}}

\begin{thebibliography}{10}
\providecommand{\url}[1]{#1}
\csname url@samestyle\endcsname
\providecommand{\newblock}{\relax}
\providecommand{\bibinfo}[2]{#2}
\providecommand{\BIBentrySTDinterwordspacing}{\spaceskip=0pt\relax}
\providecommand{\BIBentryALTinterwordstretchfactor}{4}
\providecommand{\BIBentryALTinterwordspacing}{\spaceskip=\fontdimen2\font plus
\BIBentryALTinterwordstretchfactor\fontdimen3\font minus
  \fontdimen4\font\relax}
\providecommand{\BIBforeignlanguage}[2]{{%
\expandafter\ifx\csname l@#1\endcsname\relax
\typeout{** WARNING: IEEEtran.bst: No hyphenation pattern has been}%
\typeout{** loaded for the language `#1'. Using the pattern for}%
\typeout{** the default language instead.}%
\else
\language=\csname l@#1\endcsname
\fi
#2}}
\providecommand{\BIBdecl}{\relax}
\BIBdecl

\bibitem{boyd2011distributed}
S.~Boyd, N.~Parikh, and E.~Chu, \emph{Distributed optimization and statistical
  learning via the alternating direction method of multipliers}.\hskip 1em plus
  0.5em minus 0.4em\relax Now Publishers Inc, 2011.

\bibitem{nesterov2013introductory}
Y.~Nesterov, \emph{Introductory lectures on convex optimization: A basic
  course}.\hskip 1em plus 0.5em minus 0.4em\relax Springer Science \& Business
  Media, 2013, vol.~87.

\bibitem{maybury2006detecting}
M.~Maybury, ``Detecting malicious insiders in military networks,'' MITRE CORP
  BEDFORD MA, Tech. Rep., 2006.

\bibitem{ravi2019case}
N.~Ravi, A.~Scaglione, and A.~Nedi{\'c}, ``A case of distributed optimization
  in adversarial environment,'' in \emph{ICASSP 2019-2019 IEEE International
  Conference on Acoustics, Speech and Signal Processing (ICASSP)}.\hskip 1em
  plus 0.5em minus 0.4em\relax IEEE, 2019, pp. 5252--5256.

\bibitem{sundaram2010distributed}
S.~Sundaram and C.~N. Hadjicostis, ``Distributed function calculation via
  linear iterative strategies in the presence of malicious agents,'' \emph{IEEE
  Transactions on Automatic Control}, vol.~56, no.~7, pp. 1495--1508, 2010.

\bibitem{baras2019trust}
J.~S. Baras and X.~Liu, ``Trust is the cure to distributed consensus with
  adversaries,'' in \emph{2019 27th Mediterranean Conference on Control and
  Automation (MED)}.\hskip 1em plus 0.5em minus 0.4em\relax IEEE, 2019, pp.
  195--202.

\bibitem{zhao2017resilient}
C.~Zhao, J.~He, and Q.-G. Wang, ``Resilient distributed optimization algorithm
  against adversary attacks,'' in \emph{2017 13th IEEE International Conference
  on Control \& Automation (ICCA)}.\hskip 1em plus 0.5em minus 0.4em\relax
  IEEE, 2017, pp. 473--478.

\bibitem{sundaram2016secure}
S.~Sundaram and B.~Gharesifard, ``Secure local filtering algorithms for
  distributed optimization,'' in \emph{2016 IEEE 55th Conference on Decision
  and Control (CDC)}.\hskip 1em plus 0.5em minus 0.4em\relax IEEE, 2016, pp.
  1871--1876.

\bibitem{lu2020distributed}
A.-Y. Lu and G.-H. Yang, ``Distributed secure state estimation in the presence
  of malicious agents,'' \emph{IEEE Transactions on Automatic Control}, 2020.

\bibitem{marano2008distributed}
S.~Marano, V.~Matta, and L.~Tong, ``Distributed detection in the presence of
  byzantine attacks,'' \emph{IEEE Transactions on Signal Processing}, vol.~57,
  no.~1, pp. 16--29, 2008.

\bibitem{yan2012vulnerability}
Q.~Yan, M.~Li, T.~Jiang, W.~Lou, and Y.~T. Hou, ``Vulnerability and protection
  for distributed consensus-based spectrum sensing in cognitive radio
  networks,'' in \emph{2012 Proceedings IEEE INFOCOM}.\hskip 1em plus 0.5em
  minus 0.4em\relax IEEE, 2012, pp. 900--908.

\bibitem{zhang2014distributed}
J.~Zhang, P.~Jaipuria, A.~Chakrabortty, and A.~Hussain, ``A distributed
  optimization algorithm for attack-resilient wide-area monitoring of power
  systems: Theoretical and experimental methods,'' in \emph{International
  Conference on Decision and Game Theory for Security}.\hskip 1em plus 0.5em
  minus 0.4em\relax Springer, 2014, pp. 350--359.

\bibitem{kailkhura2015consensus}
B.~Kailkhura, S.~Brahma, and P.~K. Varshney, ``Consensus based detection in the
  presence of data falsification attacks,'' \emph{arXiv preprint
  arXiv:1504.03413}, 2015.

\bibitem{berahas2018balancing}
A.~S. Berahas, R.~Bollapragada, N.~S. Keskar, and E.~Wei, ``Balancing
  communication and computation in distributed optimization,'' \emph{IEEE
  Transactions on Automatic Control}, vol.~64, no.~8, pp. 3141--3155, 2018.

\end{thebibliography}

\end{document}